\newtheorem{theorem}{Theorem}[section]
\newtheorem{corollary}[theorem]{Corollary}
\theoremstyle{definition}
\newtheorem{definition}[theorem]{Definition}
\theoremstyle{remark}
\numberwithin{equation}{section}
\begin{document}

%-------------------------------------------------------------------------
% editorial commands: to be inserted by the editorial office
%
%\firstpage{1} \volume{228} \Copyrightyear{2004} \DOI{003-0001}
%
%
%\seriesextra{Just an add-on}
%\seriesextraline{This is the Concrete Title of this Book\br H.E. R and S.T.C. W, Eds.}
%
% for journals:
%
%\firstpage{1}
%\issuenumber{1}
%\Volumeandyear{1 (2004)}
%\Copyrightyear{2004}
%\DOI{003-xxxx-y}
%\Signet
%\commby{inhouse}
%\submitted{March 14, 2003}
%\received{March 16, 2000}
%\revised{June 1, 2000}
%\accepted{July 22, 2000}
%
%
%
%---------------------------------------------------------------------------
%Insert here the title, affiliations and abstract:
%

\title[Divisibility of Andrews' Singular Overpartitions by Powers of $2$ and $3$]
 {Divisibility of Andrews' Singular Overpartitions by Powers of $2$ and $3$}

%----------Author 1

\author{Rupam Barman}
\address{Department of Mathematics, Indian Institute of Technology Guwahati, Assam, India, PIN- 781039}
\email{rupam@iitg.ac.in}

\author{Chiranjit Ray}
\address{Department of Mathematics, Indian Institute of Technology Guwahati, Assam, India, PIN- 781039}
\email{chiranjitray.m@gmail.com}

\date{To appear at Research in Number Theory, Accepted on 11th June, 2019}

%\thanks{The first author acknowledges the financial support of Department of Atomic Energy, Government of India for supporting a
%	part of this work under NBHM Fellowship. The authors thank the referee for many helpful comments.}
%----------classification, keywords, date

\subjclass{Primary 05A17, 11P83}

\keywords{Singular overpartitions; Eta-quotients; modular forms}

%----------additions
\dedicatory{}
%%% ----------------------------------------------------------------------

\begin{abstract}
Andrews introduced the partition function $\overline{C}_{k, i}(n)$, called singular overpartition, which counts the number of overpartitions of $n$ in which no part is divisible by 
$k$ and only parts $\equiv \pm i\pmod{k}$ may be overlined. He also proved that $\overline{C}_{3, 1}(9n+3)$ and $\overline{C}_{3, 1}(9n+6)$ are divisible by $3$ for $n\geq 0$. 
Recently Aricheta proved that for an infinite family of $k$, $\overline{C}_{3k, k}(n)$ is almost always even.  
In this paper, we prove that for any positive integer $k$, $\overline{C}_{3, 1}(n)$ is almost always divisible by $2^k$ and $3^k.$
\end{abstract}

%%% ----------------------------------------------------------------------
\maketitle
%%% ----------------------------------------------------------------------
%\tableofcontents
\section{Introduction and statement of results}
In \cite{corteel2004}, Corteel and Lovejoy introduced overpartitions. An overpartition of $n$ is a non-increasing sequence of natural numbers whose sum
is $n$ in which the first occurrence of a number may be overlined.
In \cite{andrews2015}, Andrews defined the partition function $\overline{C}_{k, i}(n)$, called singular overpartition, 
which counts the number of overpartitions of $n$ in which no part is divisible by $k$ and only parts $\equiv \pm i\pmod{k}$ may be overlined. 
For example, $\overline{C}_{3, 1}(4)=10$ with the relevant partitions being $4, \overline{4}, 2+2, \overline{2}+2, 2+1+1, \overline{2}+1+1, 2+\overline{1}+1, \overline{2}+\overline{1}+1, 1+1+1+1, \overline{1}+1+1+1$. 
For $k\geq 3$ and $1\leq i \leq \left\lfloor\frac{k}{2}\right\rfloor$, the generating function for $\overline{C}_{k, i}(n)$ is given by
\begin{align}\label{gen-fun-sop}
	\sum_{n=0}^{\infty}\overline{C}_{k, i}(n)q^n=\frac{(q^k; q^k)_{\infty}(-q^i; q^k)_{\infty}(-q^{k-i}; q^k)_{\infty}}{(q; q)_{\infty}},
\end{align}
where $\displaystyle (a; q)_{\infty}:= \prod_{j=0}^{\infty}(1-aq^j)$. Andrews proved the following Ramanujan-type congruences satisfied by $\overline{C}_{3, 1}(n)$: For $n\geq 0$,
$$\overline{C}_{3, 1}(9n+3)\equiv \overline{C}_{3, 1}(9n+6)\equiv 0 \pmod{3}.$$
Chen, Hirschhorn and Sellers \cite{chen2015} later showed that Andrews' congruences modulo $3$ are two examples of an infinite family
of congruences modulo $3$ which hold for the function $\overline{C}_{3, 1}(n)$. 
Recently, Ahmed and Baruah \cite{ahmed2015} found congruences modulo $4$, $18$ and $36$ for $\overline{C}_{3, 1}(n)$, infinite families of congruences modulo 
$2$ and $4$ for $\overline{C}_{8, 2}(n)$, congruences modulo $2$ and $3$ for $\overline{C}_{12, 2}(n)$ and $\overline{C}_{12, 4}(n)$; and congruences modulo $2$ for $\overline{C}_{24, 8}(n)$ and $\overline{C}_{48, 16}(n)$. 
\par In a recent work, Naika and Gireesh \cite{naika2016} proved congruences for $\overline{C}_{3, 1}(n)$ modulo $6$, $12$, $16$, $18$, and $24$. They also found infinite families of congruences 
for $\overline{C}_{3, 1}(n)$ modulo $12$, $18$, $48$, and $72$. In \cite{barman2018}, we affirm a conjecture of Naika and Gireesh by proving that
	$\overline{C}_{3,1}(12n+11)\equiv 0\pmod{144}$
for all $n\geq 0$.
\par 
In \cite{chen2015}, Chen, Hirschhorn and Sellers studied the parity of $\overline{C}_{k, i}(n)$. They showed that $\overline{C}_{3, 1}(n)$ is
always even and that $\overline{C}_{6, 2}(n)$ is even (or odd) if and only if $n$ is not (or is) a pentagonal number. 
In a very recent paper, Aricheta \cite{arichet2017} studied the parity of $\overline{C}_{3k, k}(n)$. To be specific, represent any positive integer $k$ as $k = 2^am$ where the integer $a \geq 0$ and $m$ is positive odd.
Assume further that $2^a\geq m$. Then Aricheta proved that $\overline{C}_{3k, k}(n)$ is almost always even, that is
\begin{align*}
    \lim_{X\to\infty} \frac{\# \left\{0<n\leq X: \overline{C}_{3k, k}(n)\equiv 0\pmod{2}\right\}}{X}&=1.
\end{align*}
%%%%%%%%%%%%%%%%%%%%%%%%%%%	
\par Let $k$ be a fixed positive integer. Gordon and Ono \cite{gordon1997} proved that the number of partitions of $n$ into distinct parts is divisible by $2^k$ for almost 
all $n$. Similar studies are done for some other partition functions, for example see \cite{bringmann2008, lin2012, ray2019}.  
In this article, we study divisibility of $\overline{C}_{3, 1}(n)$ by $2^k$ and $3^k$. To be specific, we prove that $\overline{C}_{3, 1}(n)$ is divisible by $2^k$ and $3^k$ for almost all $n$.  
 \begin{theorem} \label{thm1}
 	Let $k$ be a fixed positive integer. Then $\overline{C}_{3, 1}(n)$ is almost always divisible by $2^k$, namely,
 	\begin{align*}
 	\lim_{X\to\infty} \frac{\# \left\{0<n\leq X: \overline{C}_{3, 1}(n)\equiv 0\pmod{2^k}\right\}}{X}&=1.
 	\end{align*}
 \end{theorem}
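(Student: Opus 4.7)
The plan is to invoke Serre's theorem on the density of Fourier coefficients of a holomorphic integral-weight modular form vanishing modulo $M$: for any $f\in M_w(\Gamma_0(N),\chi)$ with integer coefficients and $w\ge 1$, the natural density of $n$ with $[q^n]f\equiv 0\pmod{M}$ equals $1$. It therefore suffices to produce, for each $k\ge 2$, an eta-quotient $F_k(z)$ on $\Gamma_0(24)$ of integer weight $2^{k-2}$ with integer Fourier coefficients such that
\[
F_k(z)\equiv \sum_{n\ge 0}\overline{C}_{3,1}(n)\,q^n\pmod{2^k}.
\]
The case $k=1$ is subsumed by $k=2$ (divisibility by $4$ implies divisibility by $2$), and is also the parity result of Chen--Hirschhorn--Sellers recalled in the Introduction.

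First I simplify the triple $q$-Pochhammer factor in \eqref{gen-fun-sop}. Using $(-q;q)_\infty = f_2/f_1$ at $q$ and at $q^3$, together with the identity $(-q;q^3)_\infty(-q^2;q^3)_\infty(-q^3;q^3)_\infty = (-q;q)_\infty$, one obtains
\[
\sum_{n\ge 0}\overline{C}_{3,1}(n)\,q^n \;=\; \frac{f_2\,f_3^{\,2}}{f_1^{\,2}\,f_6},\qquad f_r:=(q^r;q^r)_\infty.
\]
The engine is the binomial-theorem congruence $f_1^{2^k}\equiv f_2^{2^{k-1}}\pmod{2^k}$, proved by induction on $k$ from $(1-q^n)^2\equiv 1-q^{2n}\pmod{2}$. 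Since $f_2^{2^{k-1}}$ has constant term $1$, this inverts in $\mathbb{Z}[[q]]$ to $f_1^{-2^k}\equiv f_2^{-2^{k-1}}\pmod{2^k}$; multiplying through by $f_1^{2^k-2}$ yields $1/f_1^{\,2}\equiv f_1^{2^k-2}/f_2^{2^{k-1}}\pmod{2^k}$. Substituting back produces the candidate
\[
F_k(z)\;:=\;\frac{f_1^{\,2^k-2}\,f_3^{\,2}}{f_2^{\,2^{k-1}-1}\,f_6}\;\equiv\;\sum_{n\ge 0}\overline{C}_{3,1}(n)\,q^n \pmod{2^k}.
\]

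The central step, and the main technical obstacle, is verifying that $F_k$, viewed as $\prod_{\delta\mid 24}\eta(\delta z)^{r_\delta}$ with $(r_1,r_2,r_3,r_6)=(2^k-2,\,1-2^{k-1},\,2,\,-1)$ and $r_\delta=0$ otherwise, is a holomorphic modular form on $\Gamma_0(24)$. I will apply Ligozat's criterion (Theorems 1.64--1.65 of Ono, \emph{The Web of Modularity}). The transformation conditions reduce to $\sum \delta r_\delta = 0$ and $\sum (24/\delta)r_\delta = 36\cdot 2^{k-1}-24$, both divisible by $24$ whenever $k\ge 2$. For holomorphy at each cusp $c/d$ of $\Gamma_0(24)$ (parameterized by $d\mid 24$), a direct calculation shows that $\sum_{\delta\mid 24}\gcd(d,\delta)^2 r_\delta/\delta$ equals $3\cdot 2^{k-2}-1$ at $d=1$, equals $3\cdot 2^{k-2}+3$ at $d=3$, and vanishes at each $d\in\{2,4,6,8,12,24\}$ — all non-negative for $k\ge 2$. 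The weight $\tfrac{1}{2}\sum r_\delta = 2^{k-2}$ is then a positive integer, so Serre's theorem applied to $F_k$ with $M=2^k$, combined with the displayed congruence, delivers the result. The reason for working at level $24$ rather than the seemingly natural $\Gamma_0(6)$ or $\Gamma_0(12)$ is exactly that the second transformation sum is $9\cdot 2^{k-1}-6$ there and fails the divisibility by $24$ for $k\ge 3$; passing to level $24$ multiplies the sum by $4$ and restores the required congruence.
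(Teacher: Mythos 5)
Your proof is correct and follows essentially the same route as the paper: multiply the generating function $\frac{(q^2;q^2)_\infty(q^3;q^3)_\infty^2}{(q;q)_\infty^2(q^6;q^6)_\infty}$ by a binomial-theorem eta-quotient congruent to $1$ modulo a power of $2$, verify modularity and cusp holomorphy via Theorems 1.64--1.65 of Ono, and conclude with Serre's density theorem. The only substantive difference is that you avoid the substitution $q\mapsto q^{24}$ and work on $\Gamma_0(24)$ (handling $k=1$ via the $k=2$ case) where the paper works with $B_{2,k}(z)=\eta(24z)^{2^{k+1}-2}\eta(72z)^2/\bigl(\eta(48z)^{2^k-1}\eta(144z)\bigr)$ on $\Gamma_0(576)$; your cusp computations at level $24$ check out, so this is a correct and slightly leaner implementation of the same argument.
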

 We further prove that the partition function $\overline{C}_{3, 1}(n)$ is also divisible by $3^k$ for almost all $n$.
 \begin{theorem} \label{thm2}
 	Let $k$ be a fixed positive integer. Then $\overline{C}_{3, 1}(n)$ is almost always divisible by $3^k$, namely,
 	\begin{align*}
    \lim_{X\to\infty} \frac{\# \left\{0<n\leq X: \overline{C}_{3, 1}(n)\equiv 0\pmod{3^k}\right\}}{X}&=1.
 	\end{align*}
 \end{theorem}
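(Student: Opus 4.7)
The strategy combines an eta-quotient representation of the generating function with the familiar congruence $f_d^{3^k} \equiv f_{3d}^{3^{k-1}} \pmod{3^k}$ (where $f_r := (q^r;q^r)_\infty$), and invokes Serre's theorem on the density of modular-form coefficients divisible by a prescribed integer. From \eqref{gen-fun-sop} and the identity $(-q;q^3)_\infty(-q^2;q^3)_\infty = f_2 f_3/(f_1 f_6)$ one first obtains $\sum_n \overline{C}_{3,1}(n)\, q^n = f_2 f_3^{\,2}/(f_1^{\,2} f_6)$. The elementary congruence $(1-q^n)^3 \equiv 1 - q^{3n} \pmod 3$ combined with induction on $k$ yields $f_d^{3^k} \equiv f_{3d}^{3^{k-1}} \pmod{3^k}$ for every $d \geq 1$, so each ratio $R_d(q) := f_d^{3^k}/f_{3d}^{3^{k-1}}$ lies in $1 + 3^k\mathbb{Z}[[q]]$. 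Consequently, for any non-negative integers $t_1, t_2, t_4, t_8$,
\begin{equation*}
A_k(q) \;:=\; \frac{f_2\, f_3^{\,2}}{f_1^{\,2}\, f_6}\, \prod_{d \in \{1,2,4,8\}} R_d(q)^{t_d}
\end{equation*}
has integer coefficients and is congruent to $\sum_n \overline{C}_{3,1}(n)\, q^n$ modulo $3^k$.

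The next step is to choose $(t_d)$ so that $A_k$, viewed as the eta-quotient $\prod_{\delta \mid 24} \eta(\delta z)^{r_\delta}$ on $\Gamma_0(24)$, is a holomorphic modular form of integer weight $3^{k-1}(t_1 + t_2 + t_4 + t_8)$. The first Ligozat condition $\sum_\delta \delta\, r_\delta \equiv 0 \pmod{24}$ holds identically---each $R_d^{t_d}$ contributes $(d\cdot 3^k - 3d\cdot 3^{k-1})\, t_d = 0$ while the $A$-part sums to zero. The second condition $\sum_\delta (24/\delta)\, r_\delta \equiv 0 \pmod{24}$ simplifies---since $A$ contributes $-24 \equiv 0$ and each $R_d^{t_d}$ contributes $(64/d)\cdot 3^{k-1} t_d$---to the mild congruence $3^{k-1}(8 t_1 + 4 t_2 + 2 t_4 + t_8) \equiv 0 \pmod 3$, which is automatic for $k \geq 2$ and trivially solvable modulo $3$ for $k = 1$. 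A direct computation from the standard divisor-sum formula for the order of an eta-quotient at a cusp shows that the only cusp at which the base part $f_2 f_3^{\,2}/(f_1^{\,2} f_6)$ has negative order is $0$ (of order $-1$) and that each $R_d$ with $d$ coprime to $3$ contributes non-negatively at every cusp of $\Gamma_0(24)$, strictly positively at the cusps $c/e$ with $3 \nmid e$; hence the cusp inequalities are secured once $8 t_1 + 4 t_2 + 2 t_4 + t_8$ is sufficiently large. The outcome is that $A_k$ is a holomorphic modular form on $\Gamma_0(24)$ with integer Fourier coefficients and some quadratic Nebentypus.

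Serre's theorem, applied with modulus $3^k$, now guarantees that $[q^n]A_k(q) \equiv 0 \pmod{3^k}$ on a set of $n$ of natural density one; the congruence $A_k(q) \equiv \sum_n \overline{C}_{3,1}(n)\, q^n \pmod{3^k}$ transfers this vanishing to $\overline{C}_{3,1}$ and proves Theorem~\ref{thm2}. The main obstacle is the cusp analysis of the previous paragraph: one has to verify non-negativity of the order of $A_k$ simultaneously at each of the eight cusps of $\Gamma_0(24)$ and uniformly in $k$, which is a finite but explicit computation. Should any cusp resist the current construction, the level can be enlarged by adjoining further multipliers of the form $(f_d^{3^k}/f_{3d}^{3^{k-1}})^{t_d}$ for other $d$, since each is an eta-quotient in $1 + 3^k\mathbb{Z}[[q]]$.
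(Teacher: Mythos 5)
Your proposal is correct and follows essentially the same strategy as the paper: rewrite the generating function as $f_2f_3^{2}/(f_1^{2}f_6)$, multiply by an eta-quotient lying in $1+3^{k}\mathbb{Z}[[q]]$ so that the product becomes a holomorphic modular form (checking the Ligozat conditions and cusp orders), and then invoke Serre's density theorem. The paper realizes this at level $576$ after substituting $q\mapsto q^{24}$, using the single multiplier $\eta(24z)^{3^{k+1}}/\eta(72z)^{3^{k}}$, while you work directly at level $24$ with the multipliers $R_d^{t_d}$; this is a cosmetic difference, and your key cuspidal claims (the base has order $-1$ only at the cusp $0$, and each $R_d$ with $3\nmid d$ contributes strictly positively there and non-negatively elsewhere) are accurate.
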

Chen, Hirschhorn and Sellers \cite{chen2015} showed that $\overline{C}_{3, 1}(n)$ is even for all $n\geq 1$. Hence, we have the following corollary.
 \begin{corollary}
  Let $k$ be a fixed positive integer. Then $\overline{C}_{3, 1}(n)$ is almost always divisible by $2\cdot 3^k$.
 \end{corollary}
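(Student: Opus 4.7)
The plan is straightforward and uses only ingredients already in hand. First I would recall that divisibility by $2 \cdot 3^k$ is equivalent, via coprimality of $2$ and $3^k$, to simultaneous divisibility by $2$ and by $3^k$. So I want to show that the set
\[
S_k := \{ n \geq 1 : 2 \mid \overline{C}_{3,1}(n) \text{ and } 3^k \mid \overline{C}_{3,1}(n)\}
\]
has natural density $1$ in $\mathbb{N}$.

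Next I would invoke the two inputs separately. By the Chen--Hirschhorn--Sellers result cited in the introduction, $\overline{C}_{3,1}(n)$ is even for every $n \geq 1$, so the set $A := \{n \geq 1 : 2 \mid \overline{C}_{3,1}(n)\}$ is all of $\mathbb{N}$ and trivially has density $1$. By Theorem \ref{thm2}, the set $B_k := \{n \geq 1 : 3^k \mid \overline{C}_{3,1}(n)\}$ also has natural density $1$. Since $S_k = A \cap B_k = B_k$, the conclusion
\[
\lim_{X \to \infty} \frac{\#\{0 < n \leq X : 2 \cdot 3^k \mid \overline{C}_{3,1}(n)\}}{X} = 1
\]
follows at once. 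There is no real obstacle: the corollary is simply the combination of Theorem \ref{thm2} with the unconditional parity statement of Chen--Hirschhorn--Sellers, and the proof is a one-line density argument.
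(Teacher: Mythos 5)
Your argument is correct and is exactly the paper's intended proof: the authors state the corollary immediately after citing the Chen--Hirschhorn--Sellers result that $\overline{C}_{3,1}(n)$ is even for all $n\geq 1$, and combine it with Theorem \ref{thm2} in precisely the way you describe. Nothing further is needed.
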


%%%%%%%%%%%%%%%%%%%%%%%%%%%%%%%%%%%%%
\section{Preliminaries}
In this section, we recall some definitions and basic facts on modular forms. For more details, see for example \cite{ono2004, koblitz1993}. We first define the matrix groups 
\begin{align*}
\text{SL}_2(\mathbb{Z}) & :=\left\{\begin{bmatrix}
a  &  b \\
c  &  d      
\end{bmatrix}: a, b, c, d \in \mathbb{Z}, ad-bc=1
\right\},\\
\Gamma_{0}(N) & :=\left\{
\begin{bmatrix}
a  &  b \\
c  &  d      
\end{bmatrix} \in \text{SL}_2(\mathbb{Z}) : c\equiv 0\pmod N \right\},
\end{align*}
\begin{align*}
\Gamma_{1}(N) & :=\left\{
\begin{bmatrix}
a  &  b \\
c  &  d      
\end{bmatrix} \in \Gamma_0(N) : a\equiv d\equiv 1\pmod N \right\},
\end{align*}
and 
\begin{align*}\Gamma(N) & :=\left\{
\begin{bmatrix}
a  &  b \\
c  &  d      
\end{bmatrix} \in \text{SL}_2(\mathbb{Z}) : a\equiv d\equiv 1\pmod N, ~\text{and}~ b\equiv c\equiv 0\pmod N\right\},
\end{align*}
where $N$ is a positive integer. A subgroup $\Gamma$ of $\text{SL}_2(\mathbb{Z})$ is called a congruence subgroup if $\Gamma(N)\subseteq \Gamma$ for some $N$. The smallest $N$ such that $\Gamma(N)\subseteq \Gamma$
is called the level of $\Gamma$. For example, $\Gamma_0(N)$ and $\Gamma_1(N)$
are congruence subgroups of level $N$. 
\par Let $\mathbb{H}:=\{z\in \mathbb{C}: \text{Im}(z)>0\}$ be the upper half of the complex plane. The group $$\text{GL}_2^{+}(\mathbb{R})=\left\{\begin{bmatrix}
a  &  b \\
c  &  d      
\end{bmatrix}: a, b, c, d\in \mathbb{R}~\text{and}~ad-bc>0\right\}$$ acts on $\mathbb{H}$ by $\begin{bmatrix}
a  &  b \\
c  &  d      
\end{bmatrix} z=\displaystyle \frac{az+b}{cz+d}$.  
We identify $\infty$ with $\displaystyle\frac{1}{0}$ and define $\begin{bmatrix}
a  &  b \\
c  &  d      
\end{bmatrix} \displaystyle\frac{r}{s}=\displaystyle \frac{ar+bs}{cr+ds}$, where $\displaystyle\frac{r}{s}\in \mathbb{Q}\cup\{\infty\}$.
This gives an action of $\text{GL}_2^{+}(\mathbb{R})$ on the extended upper half-plane $\mathbb{H}^{\ast}=\mathbb{H}\cup\mathbb{Q}\cup\{\infty\}$. 
Suppose that $\Gamma$ is a congruence subgroup of $\text{SL}_2(\mathbb{Z})$. A cusp of $\Gamma$ is an equivalence class in $\mathbb{P}^1=\mathbb{Q}\cup\{\infty\}$ under the action of $\Gamma$.
\par The group $\text{GL}_2^{+}(\mathbb{R})$ also acts on functions $f: \mathbb{H}\rightarrow \mathbb{C}$. In particular, suppose that $\gamma=\begin{bmatrix}
a  &  b \\
c  &  d      
\end{bmatrix}\in \text{GL}_2^{+}(\mathbb{R})$. If $f(z)$ is a meromorphic function on $\mathbb{H}$ and $\ell$ is an integer, then define the slash operator $|_{\ell}$ by 
$$(f|_{\ell}\gamma)(z):=(\text{det}~{\gamma})^{\ell/2}(cz+d)^{-\ell}f(\gamma z).$$
\begin{definition}
Let $\Gamma$ be a congruence subgroup of level $N$. A holomorphic function $f: \mathbb{H}\rightarrow \mathbb{C}$ is called a modular form with integer weight $\ell$ on $\Gamma$ if the following hold:
\begin{enumerate}
 \item We have $$f\left(\displaystyle \frac{az+b}{cz+d}\right)=(cz+d)^{\ell}f(z)$$ for all $z\in \mathbb{H}$ and all $\begin{bmatrix}
a  &  b \\
c  &  d      
\end{bmatrix} \in \Gamma$.
\item If $\gamma\in \text{SL}_2(\mathbb{Z})$, then $(f|_{\ell}\gamma)(z)$ has a Fourier expansion of the form $$(f|_{\ell}\gamma)(z)=\displaystyle\sum_{n\geq 0}a_{\gamma}(n)q_N^n,$$
where $q_N:=e^{2\pi iz/N}$.
\end{enumerate}
\end{definition}
For a positive integer $\ell$, the complex vector space of modular forms of weight $\ell$ with respect to a congruence subgroup $\Gamma$ is denoted by $M_{\ell}(\Gamma)$.
\begin{definition}\cite[Definition 1.15]{ono2004}
	If $\chi$ is a Dirichlet character modulo $N$, then we say that a modular form $f\in M_{\ell}(\Gamma_1(N))$ has Nebentypus character $\chi$ if
	$$f\left( \frac{az+b}{cz+d}\right)=\chi(d)(cz+d)^{\ell}f(z)$$ for all $z\in \mathbb{H}$ and all $\begin{bmatrix}
	a  &  b \\
	c  &  d      
	\end{bmatrix} \in \Gamma_0(N)$. The space of such modular forms is denoted by $M_{\ell}(\Gamma_0(N), \chi)$. 
\end{definition}
Recall that Dedekind's eta-function $\eta(z)$ is defined by
\begin{align*}
	\eta(z):=q^{1/24}(q;q)_{\infty}=q^{1/24}\prod_{n=1}^{\infty}(1-q^n),
\end{align*}
where $q:=e^{2\pi iz}$ and $z\in \mathbb{H}$. A function $f(z)$ is called an eta-quotient if it is of the form
\begin{align*}
	f(z)=\prod_{\delta\mid N}\eta(\delta z)^{r_\delta},
\end{align*}
where $N$ is a positive integer and $r_{\delta}$ is an integer. 
\par 
We now recall two theorems from \cite[p. 18]{ono2004} which will be used to prove our results.
\begin{theorem}\cite[Theorem 1.64]{ono2004}\label{thm_ono1} If $f(z)=\prod_{\delta\mid N}\eta(\delta z)^{r_\delta}$ 
	is an eta-quotient such that $\ell=\frac{1}{2}\sum_{\delta\mid N}r_{\delta}\in \mathbb{Z}$, 
	$$\sum_{\delta\mid N} \delta r_{\delta}\equiv 0 \pmod{24}$$ and
	$$\sum_{\delta\mid N} \frac{N}{\delta}r_{\delta}\equiv 0 \pmod{24},$$
	then $f(z)$ satisfies $$f\left( \frac{az+b}{cz+d}\right)=\chi(d)(cz+d)^{\ell}f(z)$$
	for every  $\begin{bmatrix}
	a  &  b \\
	c  &  d      
	\end{bmatrix} \in \Gamma_0(N)$. Here the character $\chi$ is defined by $\chi(d):=\left(\frac{(-1)^{\ell} s}{d}\right)$, where $s:= \prod_{\delta\mid N}\delta^{r_{\delta}}$. 
\end{theorem}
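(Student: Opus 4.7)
The plan is to derive the asserted transformation law for $f(z)=\prod_{\delta\mid N}\eta(\delta z)^{r_\delta}$ from the classical transformation law of $\eta$ under $\mathrm{SL}_2(\mathbb{Z})$. Recall that for any $\gamma=\begin{pmatrix}a&b\\c&d\end{pmatrix}\in\mathrm{SL}_2(\mathbb{Z})$ with $c>0$ one has $\eta(\gamma z)=\varepsilon(\gamma)(cz+d)^{1/2}\eta(z)$, where $\varepsilon(\gamma)$ is an explicit $24$th root of unity whose square is a Jacobi symbol in $c,d$ and whose argument involves the Dedekind sum $s(d,c)$.

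The first step is a standard conjugation trick reducing the problem to the classical formula on each factor $\eta(\delta z)$. For $\gamma\in\Gamma_0(N)$ and $\delta\mid N$, we have $\delta\mid c$, so the matrix
\[
\gamma_\delta:=\begin{pmatrix} a & \delta b \\ c/\delta & d \end{pmatrix}
\]
lies in $\mathrm{SL}_2(\mathbb{Z})$, and $\delta\cdot\gamma z=\gamma_\delta(\delta z)$. Applying the classical $\eta$-transformation and observing that $(c/\delta)(\delta z)+d=cz+d$ yields $\eta(\delta\gamma z)=\varepsilon(\gamma_\delta)(cz+d)^{1/2}\eta(\delta z)$. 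Raising to the $r_\delta$-th power, multiplying over $\delta\mid N$, and using $\ell=\tfrac12\sum_{\delta\mid N}r_\delta\in\mathbb{Z}$ gives
\[
f(\gamma z)=(cz+d)^{\ell}\left(\prod_{\delta\mid N}\varepsilon(\gamma_\delta)^{r_\delta}\right)f(z).
\]

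It remains to identify the scalar multiplier $M(\gamma):=\prod_{\delta\mid N}\varepsilon(\gamma_\delta)^{r_\delta}$ with $\chi(d)=\bigl(\tfrac{(-1)^{\ell}s}{d}\bigr)$. Using the explicit Rademacher-type formula for $\varepsilon$, each $\varepsilon(\gamma_\delta)$ factors as a Jacobi symbol in $c/\delta$ and $d$ times an exponential $\exp(\pi i\Phi_\delta/12)$, where $\Phi_\delta$ is linear in $a+d$, $c/\delta$, and the Dedekind sum $s(d,c/\delta)$. Quadratic reciprocity, applied prime-by-prime in $d$ and using $c\equiv 0\pmod N$, collapses the product of Jacobi symbols to $\bigl(\tfrac{s}{d}\bigr)$ up to a sign factor that, combined with the $(-1)^{\ell}$ coming from the convention on the sign of $c$ versus $d$, produces exactly $\chi(d)$.

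The main obstacle, and the only place where the two divisibility hypotheses enter, is showing that the accumulated exponential factor $\prod_{\delta}\exp(\pi i r_\delta\Phi_\delta/12)$ equals $1$. After applying Dedekind's reciprocity law to each $s(d,c/\delta)$ and separating the resulting terms according to whether they scale with $\delta$ or with $N/\delta$, one finds that $12^{-1}\sum_\delta r_\delta\Phi_\delta$ is congruent modulo $2$ to an integer linear combination of $\sum_{\delta\mid N}\delta\,r_\delta$ and $\sum_{\delta\mid N}(N/\delta)r_\delta$ (with some additional terms that vanish from the $\mathrm{SL}_2(\mathbb{Z})$ relation $ad-bc=1$). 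The two hypothesized congruences modulo $24$ then force the whole exponent to vanish modulo $2\pi i$, so the exponential is trivial and the multiplier reduces to $\chi(d)$. This bookkeeping of $24$th roots of unity, which follows Newman's original treatment of eta-products, is mechanical once the conjugation reduction is in place but constitutes the delicate heart of the argument.
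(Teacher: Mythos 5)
The paper offers no proof of this statement: it is quoted verbatim as Theorem~1.64 of Ono's monograph (itself resting on Newman, Ligozat, and Gordon--Hughes), so there is no internal argument to compare yours against. Judged on its own terms, your outline follows the standard route, and the parts you make explicit are correct: the conjugation $\gamma\mapsto\gamma_\delta=\left(\begin{smallmatrix} a & \delta b \\ c/\delta & d\end{smallmatrix}\right)$ is legitimate because $\delta\mid N\mid c$ forces $c/\delta\in\mathbb{Z}$ and $\det\gamma_\delta=1$; the identity $(c/\delta)(\delta z)+d=cz+d$ reduces everything to the multiplier system of $\eta$; the hypothesis $\ell\in\mathbb{Z}$ removes the branch ambiguity in assembling $(cz+d)^{\ell}$; and your account of how the two congruences enter is the right one, since $c\sum_{\delta} r_\delta/\delta=(c/N)\sum_{\delta}(N/\delta)r_\delta$ turns the exponential part of $\prod_\delta\varepsilon(\gamma_\delta)^{r_\delta}$ into $\exp\bigl(\pi i(\ast)/12\bigr)$ with $(\ast)$ an integer combination of $\sum_\delta \delta r_\delta$ and $\sum_\delta (N/\delta)r_\delta$ modulo $24$, while the Jacobi symbols collapse to $\bigl(\tfrac{s}{d}\bigr)$ because $\gcd(\delta,d)=1$ and $\bigl(\tfrac{c}{d}\bigr)^{\sum_\delta r_\delta}=\bigl(\tfrac{c}{d}\bigr)^{2\ell}=1$.

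That said, what you have written is a road map rather than a proof: the entire content of the theorem sits in the multiplier computation that you describe and then defer as ``mechanical.'' You never fix an explicit formula for $\varepsilon(\gamma)$, so the claims that the sign bookkeeping ``produces exactly $\chi(d)$'' and that the accumulated exponent ``vanishes modulo $2\pi i$'' are assertions, not verifications; the cases $c\le 0$, $d<0$, and $d$ even (where one must pass between the two standard forms of the $\eta$-multiplier, one a Jacobi symbol in $\bigl(\tfrac{d}{c}\bigr)$ and one in $\bigl(\tfrac{c}{d}\bigr)$, and where the factor $(-1)^{\ell}$ in the character actually originates) are precisely where such computations go wrong. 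To complete the argument you would need to choose one explicit form of $\varepsilon$, carry the $24$th-root-of-unity bookkeeping through in full, and check the degenerate case $c=0$ (where the first congruence alone gives invariance under $z\mapsto z+1$). As it stands, your write-up correctly identifies the structure of the proof found in Newman's papers and in Ono's book, but does not yet constitute one.
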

Suppose that $f$ is an eta-quotient satisfying the conditions of Theorem \ref{thm_ono1} and that the associated weight $\ell$ is a positive integer. 
If $f(z)$ is holomorphic at all of the cusps of $\Gamma_0(N)$, then $f(z)\in M_{\ell}(\Gamma_0(N), \chi)$. The following theorem gives the necessary criterion for determining orders of an eta-quotient at cusps.
\begin{theorem}\cite[Theorem 1.65]{ono2004}\label{thm_ono2}
	Let $c, d$ and $N$ be positive integers with $d\mid N$ and $\gcd(c, d)=1$. If $f$ is an eta-quotient satisfying the conditions of Theorem \ref{thm_ono1} for $N$, then the 
	order of vanishing of $f(z)$ at the cusp $\frac{c}{d}$ 
	is $$\frac{N}{24}\sum_{\delta\mid N}\frac{\gcd(d,\delta)^2r_{\delta}}{\gcd(d,\frac{N}{d})d\delta}.$$
\end{theorem}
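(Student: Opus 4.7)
The plan is to compute the leading $q$-expansion of $(f|_\ell \gamma)(z)$ at $\infty$ for a matrix $\gamma \in \text{SL}_2(\mathbb{Z})$ sending $\infty$ to the cusp $c/d$, and then to read off the exponent. Since the order of vanishing at a cusp is measured with respect to the local uniformizer $q_h = e^{2\pi i z/h}$, where $h$ is the width of $c/d$ in $\Gamma_0(N)$, I would first recall the standard width formula $h = N/(d\,\gcd(d, N/d))$. Choose $a, b \in \mathbb{Z}$ with $cb - ad = 1$ and set $\gamma = \begin{pmatrix} c & a \\ d & b \end{pmatrix}$, so that $\gamma(\infty) = c/d$.

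The key step is, for each divisor $\delta$ of $N$, to decompose the determinant-$\delta$ matrix $\delta\gamma = \begin{pmatrix} \delta c & \delta a \\ d & b \end{pmatrix}$ as a product $\gamma'_\delta T_\delta$ with $\gamma'_\delta \in \text{SL}_2(\mathbb{Z})$ and $T_\delta = \begin{pmatrix} g_\delta & q_\delta \\ 0 & \delta/g_\delta \end{pmatrix}$, where $g_\delta := \gcd(\delta, d)$. Solvability of the congruence $(d/g_\delta)\, q_\delta \equiv b \pmod{\delta/g_\delta}$, which follows from $\gcd(d/g_\delta, \delta/g_\delta) = 1$, guarantees that a suitable $q_\delta$ exists so that $\gamma'_\delta := \delta\gamma\, T_\delta^{-1}$ lies in $\text{SL}_2(\mathbb{Z})$; verifying integrality and determinant one is a routine matrix computation.

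With this decomposition in hand, the Dedekind eta transformation $\eta(\gamma' w) = \varepsilon(\gamma')(c'w + d')^{1/2} \eta(w)$ for $\gamma' \in \text{SL}_2(\mathbb{Z})$ yields $\eta(\delta\gamma z) = \varepsilon(\gamma'_\delta)\,(c'_\delta T_\delta z + d'_\delta)^{1/2}\, \eta(T_\delta z)$, and from the product expansion $\eta(w) = e^{2\pi i w/24} \prod_{n\geq 1}(1 - e^{2\pi i n w})$ one reads $\eta(T_\delta z) = (\text{root of unity}) \cdot q^{g_\delta^2/(24\delta)} \prod_{n\geq 1}\bigl(1 - e^{2\pi i n(g_\delta z + q_\delta)/(\delta/g_\delta)}\bigr)$. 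Hence the leading power of $q = e^{2\pi i z}$ in $\eta(\delta\gamma z)^{r_\delta}$ is $r_\delta g_\delta^2/(24\delta)$, since the square-root factor and the infinite product tend to nonzero limits as $z \to i\infty$.

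To conclude, summing over $\delta \mid N$, the leading exponent of $(f|_\ell \gamma)(z)$ in the parameter $q$ is $\sum_{\delta \mid N} r_\delta g_\delta^2/(24\delta)$; the prefactor $(cz + d)^{-\ell}$ from the slash action and the collected roots of unity contribute nothing to this exponent. Converting to the local uniformizer $q_h$ via $q = q_h^h$ and substituting $h = N/(d\,\gcd(d, N/d))$ produces exactly $\frac{N}{24} \sum_{\delta \mid N} \frac{\gcd(d,\delta)^2 r_\delta}{\gcd(d, N/d)\, d\,\delta}$, which is the claimed formula. The principal subtlety is the matrix decomposition $\delta\gamma = \gamma'_\delta T_\delta$ together with the correct identification of the cusp width; both are bookkeeping rather than conceptual obstacles, so the theorem reduces to a careful but transparent computation.
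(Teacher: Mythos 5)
The paper does not prove this statement; it is quoted verbatim from Ono's book (Theorem 1.65), where the underlying computation is the classical lemma of Ligozat. Your argument is exactly that standard computation --- the decomposition $\begin{pmatrix} \delta & 0 \\ 0 & 1 \end{pmatrix}\gamma = \gamma'_\delta T_\delta$ with $T_\delta$ upper triangular of determinant $\delta$ and upper-left entry $\gcd(\delta,d)$, followed by the eta transformation law and rescaling by the cusp width $N/(d\gcd(d,N/d))$ --- and it is correct. One sentence is imprecise rather than wrong: the square-root factor $(c'_\delta T_\delta z + d'_\delta)^{1/2}$ does not tend to a nonzero limit as $z\to i\infty$; since $c'_\delta = d/g_\delta \neq 0$ it grows like $z^{1/2}$. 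The correct justification is that $c'_\delta T_\delta z + d'_\delta = (g_\delta/\delta)(dz+b)$, so the product of these factors over $\delta$ equals $(dz+b)^{\ell}$ times a nonzero constant and cancels exactly against the automorphy factor $(dz+b)^{-\ell}$ from the slash operator; in any case polynomial growth in $z$ cannot alter the exponential order in $q$, so your conclusion stands.
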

 
%%%%%%%%%%%%%%%%%%%%%%%%%%%%%%%%%%%%%%%%%
\section{Proof of Theorems \ref{thm1} and \ref{thm2}}
The generating function of $\overline{C}_{3, 1}(n)$ is given by 
\begin{align}\label{4}
\sum_{n=0}^{\infty}\overline{C}_{3, 1}(n)q^n=\frac{(q^2; q^2)_{\infty}(q^3; q^3)_{\infty}^2}{(q; q)_{\infty}^2(q^6; q^6)_{\infty}}.
\end{align}
We note that  $\eta(24z)= q\prod_{n=1}^{\infty}(1-q^{24n})$ is a power series of $q$. Given a prime $p$, let 
\begin{align*}
A_p(z) = \prod_{n=1}^{\infty} \frac{(1-q^{24n})^p}{(1-q^{24pn})} = \frac{\eta^p(24z)}{\eta(24pz)}. 
\end{align*}
Then using binomial theorem we have 
\begin{align}\label{4.1}
A_p^{p^k}(z) = \frac{\eta^{p^{k+1}}(24z)}{\eta^{p^k}(24pz)} \equiv 1 \pmod {p^{k+1}}.
\end{align}
Define $B_{p,k}(z)$ by
\begin{align}\label{4.2}
B_{p,k}(z) = \left(\frac{\eta(48z)\eta(72z)^2}{\eta(24z)^2\eta(144z)}\right)A_p^{p^k}(z).
\end{align}
Modulo $p^{k+1}$, we have
\begin{align}\label{new-110}
B_{p,k}(z) \equiv \frac{\eta(48z)\eta(72z)^2}{\eta(24z)^2\eta(144z)} = \frac{(q^{48}; q^{48})_{\infty}(q^{72}; q^{72})_{\infty}^2}{(q^{24}; q^{24})_{\infty}^2(q^{144}; q^{144})_{\infty}}.
\end{align}
Combining \eqref{4} and \eqref{new-110}, we obtain  
\begin{align}\label{4.3}
B_{p,k}(z) \equiv \sum_{n=0}^{\infty}\overline{C}_{3, 1}(n)q^{24n} \pmod {p^{k+1}}.
\end{align}
\par We now give a sketch of the idea of the proof of our main results. Firstly, we show that the eta-quotient $B_{p,k}(z)$ is a modular form on $\Gamma_0(576)$ with some Nebentypus character for $p=2, 3$.
Then we apply a deep theorem of Serre regarding the divisibility of Fourier coefficients of modular forms to $B_{2,k}(z)$ and $B_{3,k}(z)$. Finally, we use the congruence $\eqref{4.3}$ to deduce 
divisibility properties of $\overline{C}_{3, 1}(n)$.
\begin{proof}[Proof of Theorem \ref{thm1}]
We put $p=2$ in \eqref{4.2} to obtain  
\begin{align}\label{4.2.1}
B_{2,k}(z) = \left(\frac{\eta(48z)\eta(72z)^2}{\eta(24z)^2\eta(144z)}\right)A_2^{2^k}(z)=\frac{\eta(24z)^{2^{k+1}-2}~\eta(72z)^2}{\eta(48z)^{2^k-1}~\eta(144z)}.
\end{align}
Now, $B_{2, k}$ is an eta-quotient with $N=576$. 
The cusps of $\Gamma_{0}(576)$ are represented by fractions $\frac{c}{d}$ where $d\mid 576$ and 
$\gcd(c, d)=1$. For example, see \cite[p. 5]{ono1996}. By Theorem \ref{thm_ono2}, we find that $B_{2,k}(z)$ is holomorphic at a cusp $\frac{c}{d}$ if and only if
\begin{align*}
\frac{\gcd(d,24)^2}{24} \left(2^{k+1}-2\right) + \frac{\gcd(d,48)^2}{48}\left(1-2^{k}\right)+\frac{\gcd(d,72)^2}{36}-\frac{\gcd(d,144)^2}{144}\geq 0.
\end{align*}
Equivalently, if and only if
\begin{align*}
D:= 6\frac{\gcd(d,24)^2}{\gcd(d,144)^2} \left(2^{k+1}-2\right) + 3\frac{\gcd(d,48)^2}{\gcd(d,144)^2}\left(1-2^{k}\right)+4\frac{\gcd(d,72)^2}{\gcd(d,144)^2}-1\geq 0.
\end{align*}
In the following table, we find all the possible values of $D$.
\begin{center}
\begin{tabular}{|p{3.1cm}|p{1.9cm}|p{1.9cm}|p{1.9cm}|p{1.2cm}|}
	%\hline
	%\multicolumn{4}{|c|}{Country List} \\
	\hline
	~~$d\mid 576$ & $\dfrac{\gcd(d,24)^2}{\gcd(d,144)^2}$ &$\dfrac{\gcd(d,48)^2}{\gcd(d,144)^2}$&$\dfrac{\gcd(d,72)^2}{\gcd(d,144)^2}$& $D$\\	
	\hline	
    ~~$1, 2, 3, 4, 6, 8, 12, 24$   &~~ $1$   & ~~$1$ & ~~  $1$ &~~$9\cdot 2^k-6$\\
	\hline
	~~$16, 32, 48, 64, 96, 192$&~~ $1/4$ &~~ $1$   &~~ $ 1/4$ &~~$0$\\
	\hline
	~~$9, 18, 36, 72$ &~~ $1/9$ & ~~ $1/9$ & ~~ $1$ &~~$2^k+2$\\
	\hline
	~~$144, 288, 576$    &~~ $1/{36}$ & ~~ $1/9$ & ~~  $1/4$ &~~0\\
	\hline
\end{tabular}
\end{center}	
\vspace{.5cm}
Since $D\geq 0$ for all $d\mid 576$, we have that $B_{2,k}(z)$ is holomorphic at every cusp $\frac{c}{d}$.
Using Theorem \ref{thm_ono1}, we find that the weight of $B_{2,k}(z)$ is $\ell=2^{k-1}$. Also, the associated character for $B_{2,k}(z)$ is given by $\chi_1=(\frac{(-1)^{\ell}2^{2^{k+1}}3^{2^k+1}}{\bullet})$.
Finally, Theorem \ref{thm_ono1} yields that $B_{2,k}(z) \in M_{2^{k-1}}\left(\Gamma_{0}(576), \chi_1\right)$.
Let $m$ be a positive integer. By a deep theorem of Serre \cite[p. 43]{ono2004}, if $f(z)\in M_{\ell}(\Gamma_0(N), \chi)$ has Fourier expansion 
$$f(z)=\sum_{n=0}^{\infty}c(n)q^n\in \mathbb{Z}[[q]],$$
then there is a constant $\alpha>0$  such that
$$ \# \left\{n\leq X: c(n)\not\equiv 0 \pmod{m} \right\}= \mathcal{O}\left(\frac{X}{(\log{}X)^{\alpha}}\right).$$
Since $B_{2,k}(z) \in M_{2^{k-1}}\left(\Gamma_{0}(576), \chi_1\right)$, the Fourier coefficients of $B_{2,k}(z)$ are almost always divisible by $m=2^k$. Now, using \eqref{4.3}   
we complete the proof of the theorem.
\end{proof}
%%%%%%%%%%%%%%%%%%%%%%%%%%%%%%%%%%%%%%%%%
\begin{proof}[Proof of Theorem \ref{thm2}] We put $p=3$ in \eqref{4.2} to obtain  
\begin{align}\label{4.3.1}
B_{3,k}(z) = \left(\frac{\eta(48z)\eta(72z)^2}{\eta(24z)^2\eta(144z)}\right)A_3^{3^k}(z)=\frac{\eta(24z)^{3^{k+1}-2}~\eta(48z)}{\eta(72z)^{3^k-2}~\eta(144z)}.
\end{align}
As before, the cusps of $\Gamma_{0}(576)$ are represented by fractions $\frac{c}{d}$ where $d\mid 576$ and 
$\gcd(c, d)=1$. By Theorem \ref{thm_ono2}, $B_{3,k}(z)$ is holomorphic at a cusp $\frac{c}{d}$ if and only if
\begin{align*}
\frac{\gcd(d,24)^2}{24} \left(3^{k+1}-2\right) + \frac{\gcd(d,48)^2}{48}+\frac{\gcd(d,72)^2}{72}\left(2-3^{k}\right)-\frac{\gcd(d,144)^2}{144}\geq 0.
\end{align*}
Equivalently, if and only if
\begin{align*}
L:= 6\frac{\gcd(d,24)^2}{\gcd(d,144)^2} \left(3^{k+1}-2\right) + 3\frac{\gcd(d,48)^2}{\gcd(d,144)^2}+2\frac{\gcd(d,72)^2}{\gcd(d,144)^2}\left(2-3^{k}\right)-1\geq 0.
\end{align*}
%%%%%%%%%%%%%%%%%%%%%%%%%%%%%%%%%%%%%%%%%%%%%%%%%%
Using the table used to evaluate the values of $D$, we find that $L\geq 0$ for all $d\mid 576$. As before, using Theorem \ref{thm_ono1} we find that $B_{3,k}(z) \in M_{3^{k}}\left(\Gamma_{0}(576), \chi_2\right)$, 
where the character $\chi_2$ is given by $\chi_2=(\frac{-2^{2\cdot 3^{k+1}}3^{3^k+1}}{\bullet})$. 
Using the same reasoning and \eqref{4.3}, we find that $\overline{C}_{3, 1}(n)$ is divisible by $3^k$ for almost all $n\geq 0$.
This completes the proof of the theorem.
\end{proof}
%%%%%%%%%%%%%%%%%%%%%%%%%%%%%%%%%%%%%%%%%

%\bibliographystyle{acm}
%\bibliographystyle{sej}
%\bibliographystyle{plain}
%\bibliography{ref.cray.bib}
\end{document}